\documentclass[12pt]{article}
\usepackage{natbib}
\usepackage{dcolumn}
\usepackage{graphicx}
\usepackage{amsmath}
\usepackage{amsfonts}
\usepackage{amsopn}
\usepackage{amsthm}
\usepackage[figuresright]{rotating}
\usepackage{amsmath,amsthm,amsfonts,epsfig,amssymb,natbib,eucal,eufrak}
\usepackage{enumitem}
\usepackage{color}
\newtheorem{thm}{Theorem}
\newtheorem{lem}{Lemma}


\newtheorem{cor}{Corollary}

\setlength{\oddsidemargin}{0in}
\setlength{\evensidemargin}{0in}
\setlength{\textwidth}{6.5in}
\setlength{\topmargin}{0in}
\setlength{\headheight}{0in}
\setlength{\headsep}{0in}
\setlength{\textheight}{9in}
\renewcommand{\baselinestretch}{1.8}

\begin{document}
\begin{center}
{\Large\bf Monotonicity in the Sample Size of the Length of Classical Confidence Intervals}
\end{center}
\vspace{.5cm}
\centerline{Abram M. Kagan${}^{a}$ and Yaakov Malinovsky${}^{b, *}$}
\begin{center}
${}^{a}$ Department of Mathematics, University of Maryland, College Park, MD 20742, USA \\
${}^{b}$ Department of Mathematics and Statistics , University of Maryland Baltimore County, Baltimore, MD 21250, USA
\end{center}
\centerline{${}^{a}$   {\it email}: amk@math.umd.edu}
\centerline{${}^{b}$  {\it email}: yaakovm@umbc.edu, ${}^{*}$Corresponding author}

\vspace{.5cm} \centerline{Summary}
It is proved that the average length of standard confidence intervals for parameters of gamma and normal distributions
monotonically decrease with the sample size. The proofs are based on fine properties of the classical gamma function.\\
\noindent {\it Key words:} Gamma function; Location and scale parameters; Stochastic monotonicity

\section{Introduction and Lemmas}
In recent issues of the Bulletin of the IMS (see \cite{Shi2008}, \cite{DasGupta2008}), a discussion was held on the behavior of standard estimators of parameters as functions of the sample size $n$.
If $R(\widetilde{\theta}_{n}, \theta)$ is the risk of an estimator $\widetilde{\theta}_{n}$ constructed from a sample of size $n$, a very desirable property
of $\widetilde{\theta}_{n}$ would be
\begin{equation}
\label{eq:rschyi}
R(\widetilde{\theta}_{n+1}, \theta)\leq R(\widetilde{\theta}_{n}, \theta)
\end{equation}
for all $\theta$. Unfortunately, even when \eqref{eq:rschyi} holds for a class of estimators and/or families, it can be difficult to prove it.\\
\noindent
One of few examples of classical estimators with a monotonically (in $n$) decreasing risk is the Pitman estimator of a location parameter.
Let $\left(X_1,\ldots,X_n\right)$ be a sample from population $F(x-\theta)$ and let ${t}_{n}={t}_{n}\left(X_1,\ldots,X_n\right)$
be the Pitman estimator corresponding to an (invariant) loss function $L(\widetilde{{\theta}}, \theta)=L(\widetilde{{\theta}}-\theta)$.
The corresponding risk $R(\widetilde{{\theta}}_{n})$ of any equivariant estimator $\widetilde{{\theta}}_{n}$ is constant in $\theta$ and
by the very definition of $t_{n}$, $R(t_{n})\geq R(t_{n+1})$ for any $F$.
A deeper result holds for the Pitman estimator corresponding to the quadratic loss $L(\widetilde{\theta}-\theta)=(\widetilde{\theta}-\theta)^{2}$.
If $\displaystyle \int x^{2}dF(x)<\infty$, then for any $n$, $Var(t_{n})<\infty$ and
\begin{equation}
\label{eq:RiskP}
nVar(t_{n})\geq (n+1)Var(t_{n+1}).
\end{equation}
\noindent
The proof of \eqref{eq:RiskP} in \cite{Kagan2011} is based on a lemma of general interest from \cite{Artstein2004}.
The inequality was used in studying a geometric property of the sample mean in \cite{Kagan2009}.

Turning to the interval estimation of parameters, one finds a very natural loss function, namely the length of a confidence interval.
Here we study the risk, i.\,e., the average length of the standard confidence intervals for the scale parameter
$\beta$ of a gamma distribution $Gamma\left(a, \beta\right)$ and for the mean $\mu$ and variance $\sigma^{2}$ of a normal distribution $N(\mu, \sigma^{2} )$.
Though our results are new, to the best of our knowledge, their interest is more methodological than applied. Notice, however, that the distributions we study
are often used in different applications.

It is proved that the average length of the standard confidence interval of a given level $1-\alpha$ monotonically decreases with the sample size
$n$. Though the monotonicity seems a very natural property, the proofs are based on fine properties of the gamma function and are nontrivial.

We write $X\sim Gamma\left(a, \beta\right)$ if the probability density function of $X$ is
\begin{equation}
f\left(x; a,\beta\right)=\frac{1}{\beta^{a}\Gamma(a)}x^{a-1}e^{-x/\beta},\,\,\, x>0, a>0, \beta>0.
\end{equation}

\begin{lem}
\label{lem:lemma1}
Let $F$ and $G$ be distribution function with densities $f$ and $g$ that are positive
and continuous on an open interval $I = (a, b)$, $-\infty\leq a<b\leq \infty$ and are zero off the interval.
Suppose the following condition holds.
\begin{enumerate}
\item[(C)]
There are numbers $c_1$ and $c_2$ in the interval $I$ with $c_1 < c_2$ such that $f(x) > g(x)$
for $x\in(a, c_1)\cup(c_2, b)$ and $f(x) < g(x)$ for $c_1 < x < c_2$.
\end{enumerate}
Then there is a unique $x_0$ such that $F(x) > G(x)$ for $a < x < x_0$ and $F(x) < G(x)$ for
$x_0 < x < b$.
This implies that $F^{-1}(u) < G^{-1}(u)$ for $u < u_0$ and $F^{-1}(u) > G^{-1}(u)$ for
$u > u_0$ where $u_0 = F(x_0) = G(x_0)$.
\end{lem}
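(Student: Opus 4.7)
The plan is to analyze the sign of the difference $H(x) := F(x) - G(x)$ by exploiting the information condition (C) gives about $H'(x) = f(x) - g(x)$, then translate the conclusion for $H$ into the stated conclusion for the quantile functions.

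First I would note that since $f$ and $g$ both integrate to $1$ on $I = (a,b)$, we have the boundary behavior $\lim_{x \downarrow a} H(x) = 0$ and $\lim_{x \uparrow b} H(x) = 0$. By (C), $H'$ is positive on $(a, c_1)$, negative on $(c_1, c_2)$, and positive on $(c_2, b)$, so $H$ is strictly increasing on $(a, c_1)$, strictly decreasing on $(c_1, c_2)$, and strictly increasing on $(c_2, b)$. Combining this monotonicity pattern with the two boundary values, we must have $H(c_1) > 0$ and $H(c_2) < 0$.

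Next I would locate the crossing point. On $(a, c_1]$, $H$ is strictly positive; on $(c_2, b)$, $H$ is strictly increasing from $H(c_2) < 0$ up to the boundary value $0$, hence strictly negative throughout. On the middle interval $[c_1, c_2]$, $H$ is strictly decreasing from a positive value to a negative value, so by the intermediate value theorem there is a unique $x_0 \in (c_1, c_2)$ with $H(x_0) = 0$; moreover $H > 0$ on $(c_1, x_0)$ and $H < 0$ on $(x_0, c_2)$. Assembling the three pieces yields $F(x) > G(x)$ for $a < x < x_0$ and $F(x) < G(x)$ for $x_0 < x < b$, with uniqueness of $x_0$.

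Finally, for the quantile statement, I would use that $f$ and $g$ are positive and continuous on $I$, so $F$ and $G$ are strict homeomorphisms of $I$ onto $(0,1)$. Set $u_0 := F(x_0) = G(x_0)$. For $u < u_0$, let $x = G^{-1}(u) < x_0$; then $F(x) > G(x) = u$, and strict monotonicity of $F$ gives $F^{-1}(u) < x = G^{-1}(u)$. The symmetric argument for $u > u_0$ gives $F^{-1}(u) > G^{-1}(u)$.

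The argument is mostly bookkeeping rather than genuinely hard; the only point that requires a moment of care is ruling out additional zeros of $H$ by using the boundary values $H(a^+) = H(b^-) = 0$ together with the three monotone pieces, to force $H(c_1) > 0$ and $H(c_2) < 0$ (so that a single crossing is both guaranteed and unique).
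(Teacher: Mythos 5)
Your proof is correct and takes essentially the same route as the paper: both determine the sign of $F-G$ near the endpoints from condition (C), obtain the crossing point by the intermediate value theorem, and derive uniqueness from the strict inequality $f<g$ on $(c_1,c_2)$ (your strict monotonicity of $H=F-G$ on the middle interval is just the paper's Rolle argument in contrapositive form). The only addition is that you spell out the quantile-inversion step, which the paper states without proof.
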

\begin{proof}
One has $F(x) > G(x)$ for $a <x\leq c_1$ and $1-F(x)> 1-G(x)$ (and thus $F(x) < G(x)$)
for $c_2\leq x < b$. By the intermediate value theorem there is a point $x_0$ between $c_1$ and $c_2$
such that $F(x_0) = G(x_0)$. This point $x_0$ is unique. Indeed, if there were two such points,
say $x_1$ and $x_2$ with $x_1 < x_2$, we have $F(x_1)-G(x_1) = 0$ and $F(x_2)-G(x_2) = 0$ and
Rolle's Theorem yields $f(x)-g(x)=0$ for some $x \in (x_1, x_2)$ contradicting $f(x) < g(x)$ for
$c_1 < x < c_2$.
\end{proof}
Condition $(C)$ is satisfied if the log-likelihood ratio $\displaystyle r(x)=\log\left(f(x)/g(x)\right),\,\,\,a<x<b$
is strictly convex and $\lim\inf_{x\rightarrow a{+0}}r(x)>0$, $\lim\sup_{x\rightarrow b{-0}}r(x)>0$.

Lemma \ref{lem:lemma1} suggested by an anonymous referee is a general version of the authors' original lemma, which is a direct corollary.
\begin{cor}
\label{eq:cor}
If $X_{1}\sim Gamma\left(a_{1}, \beta_{1}\right)$, $X_{2}\sim Gamma\left(a_{2}, \beta_{2}\right)$ with $a_1<a_2,\,\, \beta_1>\beta_2$, then exists a unique  $x^{*}=x^{*}\left(a_1, a_2, \beta_1, \beta_2\right)$
such that the distribution functions $F_{1}$ of $X_1$, and $F_{2}$ of $X_2$
have the following properties:
\begin{equation}
F_{1}\left(x\right)>F_{2}\left(x\right)\,\,\, for\,\,\, x<x^{*}\,\,\,
\text{and}\,\,\,
F_{1}\left(x\right)<F_{2}\left(x\right)\,\,\, for\,\,\, x>x^{*}.
\end{equation}
In particular, if $\alpha^{*}=\alpha^{*}(a_1, a_2, \beta_1, \beta_2)=F_{1}(x^{*})=F_{2}(x^{*})$ and
${\displaystyle  \gamma_{a_{i}, \beta_{i};\, \alpha}}$ is the quantile of
order $\alpha$ of $Gamma\left(a_{i}, \beta_{i}\right), i=1,2$, then
for $\alpha<\alpha^{*}$, $\displaystyle  \gamma_{a_{1}, \beta_{1};\, \alpha}<\gamma_{a_{2}, \beta_{2};\, \alpha}$
and  $\displaystyle  \gamma_{a_{1}, \beta_{1};\, 1-\alpha}>\gamma_{a_{2}, \beta_{2};\, 1-\alpha}$.
\end{cor}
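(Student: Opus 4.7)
The plan is to deduce the corollary as a direct application of Lemma \ref{lem:lemma1} on the interval $I=(0,\infty)$, taking $F=F_1$, $G=F_2$. So the only real task is to verify condition (C) for the two gamma densities, and for that I would use the sufficient criterion stated in the remark following Lemma \ref{lem:lemma1}, namely strict convexity of the log--likelihood ratio together with positive limits at the endpoints.

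I would write
\[
r(x)=\log\frac{f_1(x)}{f_2(x)}=C+(a_1-a_2)\log x+\left(\frac{1}{\beta_2}-\frac{1}{\beta_1}\right)x,\qquad x>0,
\]
where $C=\log\bigl(\beta_2^{a_2}\Gamma(a_2)/(\beta_1^{a_1}\Gamma(a_1))\bigr)$. Differentiating twice gives $r''(x)=-(a_1-a_2)/x^2>0$ since $a_1<a_2$, so $r$ is strictly convex on $(0,\infty)$. For the boundary behavior, since $a_1-a_2<0$ the term $(a_1-a_2)\log x\to+\infty$ as $x\to 0+$, and since $\beta_1>\beta_2$ the term $(\beta_2^{-1}-\beta_1^{-1})x\to+\infty$ as $x\to\infty$; in both cases $r(x)\to+\infty$. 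Thus the criterion of the remark is met and condition (C) holds, producing $c_1<c_2$ with $f_1>f_2$ off $[c_1,c_2]$ and $f_1<f_2$ on $(c_1,c_2)$. (One small sanity check worth mentioning: the existence of such $c_1,c_2$ also requires $r$ to go strictly negative somewhere, which is automatic because $f_1$ and $f_2$ are distinct probability densities, so $r\ge 0$ everywhere would force $f_1\equiv f_2$.)

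Once (C) is established, Lemma \ref{lem:lemma1} gives directly the unique crossing point $x^{*}$ of $F_1$ and $F_2$ with $F_1(x)>F_2(x)$ for $x<x^{*}$ and $F_1(x)<F_2(x)$ for $x>x^{*}$, which is the first assertion of the corollary. Setting $\alpha^{*}=F_1(x^{*})=F_2(x^{*})$ and applying the quantile conclusion of Lemma \ref{lem:lemma1} yields $F_1^{-1}(\alpha)<F_2^{-1}(\alpha)$ for $\alpha<\alpha^{*}$, i.e.\ $\gamma_{a_1,\beta_1;\alpha}<\gamma_{a_2,\beta_2;\alpha}$; the symmetric statement for upper quantiles follows identically from $F_1^{-1}(u)>F_2^{-1}(u)$ for $u>\alpha^{*}$ applied at $u=1-\alpha$.

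There is really no substantial obstacle here: the entire content of the corollary is packed into Lemma \ref{lem:lemma1}, and the only computation is the two-line verification that the gamma log-ratio is strictly convex with the correct endpoint behavior. The one mild subtlety I would flag in the write-up is the nonnegativity of the minimum of $r$ being ruled out by the density normalization, since that is what guarantees the two density crossings $c_1,c_2$ required by hypothesis (C) rather than merely one.
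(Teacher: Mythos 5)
Your proposal is correct and follows exactly the route the paper intends: the paper offers no separate argument for the corollary beyond declaring it a direct consequence of Lemma \ref{lem:lemma1} via the remark that strict convexity of the log-likelihood ratio with positive limits at both endpoints yields condition (C), and your two-line computation of $r(x)=C+(a_1-a_2)\log x+(\beta_2^{-1}-\beta_1^{-1})x$ supplies precisely that verification. Your added observation that the density normalization rules out $r\ge 0$ everywhere (so that the two crossings $c_1<c_2$ really exist) is a sound and worthwhile filling of a detail the paper's remark leaves implicit.
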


For a special case of semi-integers $a_1, a_2$ (i.\,e., for the chi-squared distribution)
the result of Corollary \ref{eq:cor} was obtained in \cite{Sz2003} by different arguments.\\
The next lemma deals with a useful property of the classical gamma function.
\begin{lem}
\label{lem:2}
For any $x>0$,
\begin{equation}
\label{eq:G}
\sqrt{x+{1}/{4}}<\frac{\Gamma(x+1)}{\Gamma(x+1/2)}<\sqrt{x+{1}/{2}}.
\end{equation}
\end{lem}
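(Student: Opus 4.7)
My plan is to split the inequality \eqref{eq:G} into two separate bounds and reduce each to a property of $\log\Gamma$ or its derivative $\psi$. The right-hand bound $\Gamma(x+1)/\Gamma(x+1/2) < \sqrt{x+1/2}$, after squaring and using $\Gamma(x+3/2)=(x+1/2)\Gamma(x+1/2)$, is just $\Gamma(x+1)^2 < \Gamma(x+1/2)\Gamma(x+3/2)$, i.e.\ strict log-convexity of $\Gamma$ at the midpoint $x+1 = \tfrac12[(x+1/2)+(x+3/2)]$; this is immediate from $(\log\Gamma)''(x) = \psi'(x) = \sum_{k\ge 0}(k+x)^{-2} > 0$.

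For the left-hand bound I would introduce
$$L(x) = 2\log\Gamma(x+1) - 2\log\Gamma(x+1/2) - \log(x+1/4)$$
and show that $L$ is strictly decreasing on $(0,\infty)$ with $\lim_{x\to\infty} L(x) = 0$; since $L$ strictly decreases to $0$, it is positive on all of $(0,\infty)$, which is the desired inequality. The limit is a routine Stirling calculation: $\log\Gamma(x+1) - \log\Gamma(x+1/2) = \tfrac12\log x + \tfrac{1}{8x} + O(x^{-2})$, so $[\Gamma(x+1)/\Gamma(x+1/2)]^2 = x + 1/4 + O(x^{-1})$ and $L(x) = O(x^{-2})$. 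For monotonicity, I would differentiate to get $L'(x) = 2[\psi(x+1) - \psi(x+1/2)] - (x+1/4)^{-1}$ and insert the integral representations
$$\psi(x+1) - \psi(x+1/2) = \int_0^{\infty}\frac{e^{-(x+1/2)u}}{1+e^{-u/2}}\,du, \qquad \frac{1}{x+1/4} = \int_0^{\infty}e^{-(x+1/4)u}\,du,$$
the first obtained by expanding $\sum_{k\ge 0}[(k+x+1/2)^{-1} - (k+x+1)^{-1}]$, replacing $1/A$ by $\int_0^\infty e^{-Au}du$, and collapsing the resulting geometric series. Combining both integrals and simplifying with $y = e^{-u/4}$ (so $e^{-u/2} = y^2$) then yields
$$L'(x) = \int_0^{\infty}e^{-(x+1/4)u}\left[\frac{2y}{1+y^2} - 1\right]du = -\int_0^{\infty}\frac{e^{-(x+1/4)u}\,(1 - e^{-u/4})^2}{1+e^{-u/2}}\,du,$$
since $\tfrac{2y}{1+y^2} - 1 = -\tfrac{(1-y)^2}{1+y^2}$. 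The final integrand is strictly negative on $(0,\infty)$, so $L'(x) < 0$, as required.

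The main obstacle is spotting this \emph{perfect square} identity inside the integrand; it is what converts the difference of two otherwise awkward integrals into a manifestly negative quantity. Verifying $L(\infty) = 0$ via Stirling and deriving the integral representation of the digamma difference are both standard.
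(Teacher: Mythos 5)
Your proof is correct, and it takes a genuinely different route from the paper: the paper does not actually prove Lemma~\ref{lem:2} but simply cites \cite{Lorch1984} for integer $x$ and \cite{Laforgia1984} for arbitrary $x>0$, whereas you give a self-contained argument. Your upper bound is exactly the strict midpoint log-convexity of $\Gamma$ after the reduction $\Gamma(x+3/2)=(x+1/2)\Gamma(x+1/2)$, and your lower bound follows from showing that $L(x)=2\log\Gamma(x+1)-2\log\Gamma(x+1/2)-\log(x+1/4)$ decreases strictly to $0$. I checked the key steps: the series $\psi(x+1)-\psi(x+1/2)=\sum_{k\ge0}\bigl[(k+x+1/2)^{-1}-(k+x+1)^{-1}\bigr]$, the collapse $(1-e^{-u/2})/(1-e^{-u})=1/(1+e^{-u/2})$ after inserting $1/A=\int_0^\infty e^{-Au}\,du$ (the sum--integral interchange is legitimate since all terms are nonnegative), the identity $2y/(1+y^2)-1=-(1-y)^2/(1+y^2)$ with $y=e^{-u/4}$ giving $L'(x)<0$, and the asymptotic $[\Gamma(x+1)/\Gamma(x+1/2)]^2=x+1/4+O(x^{-1})$ giving $L(x)\to0$; together with strict monotonicity this indeed forces $L>0$ on $(0,\infty)$. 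What your approach buys is that the paper would no longer need to lean on the gamma-function literature for inequality \eqref{eq:G}; what the citation approach buys is brevity and access to the sharper refinements in \cite{Laforgia1984} and \cite{Laforgia2011}. A minor presentational point: in the displayed formula for $L'(x)$ the quantity $y=e^{-u/4}$ depends on the integration variable $u$, which you state in words but should make explicit in the display.
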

\begin{proof}
For an integer $x$, \eqref{eq:G} was proved in \cite{Lorch1984} and for an arbitrary $x$ in \cite{Laforgia1984}.
\end{proof}
Many useful inequalities for the gamma function are in \cite{Laforgia2011}.
We shall need \eqref{eq:G} for semi-integer $x$.
\begin{figure}
\centerline{\includegraphics[ height=2.8 in]{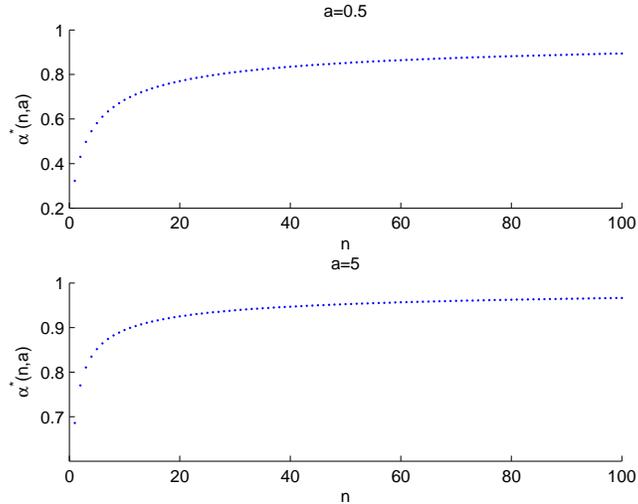}}
\caption{\small The critical values of $\alpha^{*}$ as a function of $n$ }
\label{fig:comp0}
\end{figure}
\section{Mean Length of Confidence Intervals}
\subsection{Confidence Interval for the Scale Parameter of Gamma Distribution}
Let now $\left(X_1,X_2,\ldots,X_n\right)$ be a sample from a population $Gamma\left(a,\beta\right)$ with a known shape parameter $a$
and a scale parameter $\beta$ to be estimated.
The sum $\sum_{i=1}^{n}X_i \sim Gamma\left(na,\beta\right)$ is a sufficient statistics for $\beta$ and
the ratio $\sum_{i=1}^{n}X_i/\beta \sim Gamma\left(na, 1\right)$ is a pivot leading to the standard confidence interval
for $\beta$ of level $1-\alpha$,
$\displaystyle
\left(\frac{n\overline{X}_{n}}{\gamma_{na;\, 1-\alpha/2}},\,\,\, \frac{n\overline{X}_{n}}{\gamma_{na;\, \alpha/2}} \right)
$. Its average length is
$\displaystyle L_{n}=\beta\left(\frac{1}{\gamma_{na;\, \alpha/2}/(na)}-\frac{1}{\gamma_{na;\, 1-\alpha/2}/(na)}\right),$
where $ \displaystyle \gamma_{na;\, \alpha/2}$ is the quantile of order $\alpha/2$ of $Gamma\left(na, 1\right)$.

\noindent
If $G_{i}(x)$ is the distribution function of $X_{i}\sim Gamma(a_{i}, 1), \,i=1,2$, then for $a_1<a_2$, $G_{1}(x)>G_{2}(x)$
(equivalently, $X_1$ is stochastically smaller than $X_2$). Therefore, $ \displaystyle \gamma_{a_{1};\, \alpha}<\gamma_{a_{2};\, \alpha}$
for all $ \displaystyle \alpha,\, 0<\alpha<1$. In particular,
\begin{equation}
\label{eq:in}
\displaystyle
\gamma_{{na};\, \alpha}<\gamma_{{(n+1)a};\, \alpha}.
\end{equation}
The quantile of order $\alpha$ of $Gamma\left(na, 1/(na)\right)$ is $\displaystyle \gamma_{na;\, \alpha}/(na)$ and its relation to the quantile
$\displaystyle \gamma_{(n+1)a;\, \alpha}/((n+1)a)$ differs from \eqref{eq:in}. The following result holds.
\begin{thm}
\label{th:main}
For $\alpha<\alpha^{*}(n,a)$,\,\,$L_{n+1}<L_{n}$.
\end{thm}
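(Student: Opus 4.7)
The plan is to rewrite $L_n$ in terms of quantiles of the rescaled gamma distribution $Gamma(na, 1/(na))$, which has mean $1$ for every $n$, and then apply Corollary \ref{eq:cor} to the pair of indices $n$ and $n+1$. First, set $q_n(u) = \gamma_{na;u}/(na)$. Since scaling $Gamma(na,1)$ by $1/(na)$ scales its quantiles by the same factor, $q_n(u)$ is the $u$-quantile of $Gamma(na,1/(na))$, and therefore
\begin{equation*}
L_n = \beta\left(\frac{1}{q_n(\alpha/2)} - \frac{1}{q_n(1-\alpha/2)}\right).
\end{equation*}

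Next, I would apply Corollary \ref{eq:cor} with $a_1=na,\ a_2=(n+1)a,\ \beta_1=1/(na),\ \beta_2=1/((n+1)a)$. The hypotheses $a_1<a_2$ and $\beta_1>\beta_2$ hold, so the log-likelihood ratio has strictly positive second derivative $a/x^2>0$ and tends to $+\infty$ at both endpoints; thus condition $(C)$ holds and the corollary produces a unique crossing level $\alpha^{*}_{0}=\alpha^{*}_{0}(n,a)$ such that $q_n(u)<q_{n+1}(u)$ for $u<\alpha^{*}_{0}$ and $q_n(u)>q_{n+1}(u)$ for $u>\alpha^{*}_{0}$.

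Forming the difference,
\begin{equation*}
L_n - L_{n+1} = \beta\left[\left(\frac{1}{q_n(\alpha/2)} - \frac{1}{q_{n+1}(\alpha/2)}\right) + \left(\frac{1}{q_{n+1}(1-\alpha/2)} - \frac{1}{q_n(1-\alpha/2)}\right)\right],
\end{equation*}
each bracketed term is individually strictly positive provided $\alpha/2<\alpha^{*}_{0}$ (for the first, by applying the corollary at the lower tail) and $1-\alpha/2>\alpha^{*}_{0}$ (for the second, at the upper tail). Define $\alpha^{*}(n,a):=2\min\{\alpha^{*}_{0},\,1-\alpha^{*}_{0}\}$; then for every $\alpha<\alpha^{*}(n,a)$ both inequalities hold, so $L_{n+1}<L_n$.

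The conceptual obstacle is located entirely in Corollary \ref{eq:cor} rather than in the algebra above: one needs the two normalized gamma distribution functions $F_n$ and $F_{n+1}$ to cross exactly once, and to know which tail dominates on each side of that crossing. Once this is in hand, the theorem is a one-line consequence of the sign analysis of the two tail-quantile differences. Lemma \ref{lem:2} is not needed here (it will be invoked for the normal-distribution results later); the only point requiring a mild extra comment is that the crossing level $\alpha^{*}_{0}$ is in the interior of $(0,1)$, which is guaranteed by the strict convexity of the log-ratio and the boundary behavior noted after Lemma \ref{lem:lemma1}.
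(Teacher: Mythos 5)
Your proposal is correct and follows essentially the same route as the paper: apply Corollary \ref{eq:cor} with $a_1=na$, $\beta_1=1/(na)$, $a_2=(n+1)a$, $\beta_2=1/((n+1)a)$ to the unit-mean rescaled gamma laws, obtain the two tail-quantile inequalities of \eqref{eq:777}, and read off $L_{n+1}<L_n$. Your only additions—verifying condition $(C)$ via the strict convexity of the log-likelihood ratio and defining $\alpha^{*}(n,a)=2\min\{\alpha^{*}_{0},1-\alpha^{*}_{0}\}$ so that both tails are controlled—are slightly more explicit than the paper's wording but do not change the argument.
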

\begin{proof}
By virtue of Corollary \ref{eq:cor} applied to the case of
$a_1=na, \beta_1=1/(na), a_2=(n+1)a, \beta_2=1/((n+1)a)$ one gets
\begin{align}
\label{eq:777}
&\frac{1}{\gamma_{(n+1)a;\,\alpha/2}/((n+1)a)}<\frac{1}{\gamma_{na;\, \alpha/2}/(na)}\,\,\,\text{and}\,\,\,
\frac{1}{\gamma_{(n+1)a;\,1-\alpha/2}/((n+1)a)}>\frac{1}{\gamma_{na;\,1-\alpha/2}/(na)},
\end{align}
for $\alpha<\alpha^{*}(n, a)$.
The result follows immediately from \eqref{eq:777}.
\end{proof}
As a function of $n$ for a given $a$, $\alpha^{*}(n,a)$ grows very fast (see Figure \ref{fig:comp0}).

Note that Theorem \ref{th:main} also holds for an asymmetric confidence interval. Namely, let $\alpha_1+\alpha_2=\alpha$,
then the average length of the confidence interval
$\displaystyle \left(\frac{n\overline{X}_{n}}{\gamma_{na;\,1-\alpha_{2}}},\,\,\, \frac{n\overline{X}_{n}}{\gamma_{na;\,\alpha_{1}}} \right)$
is a decreasing function of $n$.

A standard (one-sided) lower confidence bound of level $1-\alpha$ for the parameter $\beta$ is
$\displaystyle \frac{n\overline{X}_{n}}{\gamma_{na;\,1-\alpha}}$.
The statistician is interested in having (for a given level $1-\alpha$) a larger lower bound.
From Corollary \ref{eq:cor} for $\alpha<\alpha*$ it follows that
$\displaystyle E\left(\frac{n\overline{X}_{n}}{\gamma_{na;\,1-\alpha}}\right)=\frac{\beta}{\gamma_{na;\,1-\alpha}/(na)}$
is an increasing function of $n$. Similarly, for an upper confidence bound $\displaystyle \frac{n\overline{X}_{n}}{\gamma_{na;\,\alpha}}$
of level $1-\alpha$, $\displaystyle E\left(\frac{n\overline{X}_{n}}{\gamma_{na;\,\alpha}}\right)$ is decreasing function of $n$.
\subsection{Confidence Interval for the Normal Variance}
Let now $\left(X_1,X_2,\ldots,X_n\right)$
be a sample from a normal population $N\left(\mu, \sigma^2\right)$ with $\mu$ and $\sigma^2$ as parameters. The standard confidence interval of level $\left(1-\alpha\right)$
for $\sigma^2$ is
\begin{equation}
\label{eq:chiinterval}
\left(\frac{(n-1)S_{n}^2}{\chi^{2}_{n-1;\,1-\alpha/2}},\,\,\,\frac{(n-1)S_{n}^2}{\chi^{2}_{n-1;\,\alpha/2}}\right),
\end{equation}
where $S_{n}^2$ is the sample variance and $\chi^{2}_{n-1;\,\alpha/2}$ is the quantile of
order $\alpha/2$ of chi-square distribution with $n-1$ degrees of freedom. The average length of the interval \eqref{eq:chiinterval}
is \\$L_{n}=\displaystyle \sigma^2 \left(\frac{1}{\chi^{2}_{n-1;\,\alpha/2}/(n-1)}-\frac{1}{\chi^{2}_{n-1;\,1-\alpha/2}/(n-1)}\right).$
If $X\sim \chi^{2}_{d}$, then $X/d \sim Gamma\left(\frac{d}{2}, \frac{2}{d}\right)$
and again Corollary \ref{eq:cor} is applicable.  Thus, for $\alpha<\alpha^{*}(n)$, monotonicity of $L_n$ holds, $L_n>L_{n+1}.$
A table of the values of $\alpha^{*}(n)$ can be found in \cite{Sz2003}.
For the sake of completeness a graph of $\alpha^{*}(n)$ is drawn in Figure \ref{fig:comp}.
\renewcommand{\baselinestretch}{1.1}
\begin{figure}
\centerline{\includegraphics[ height=2.8 in]{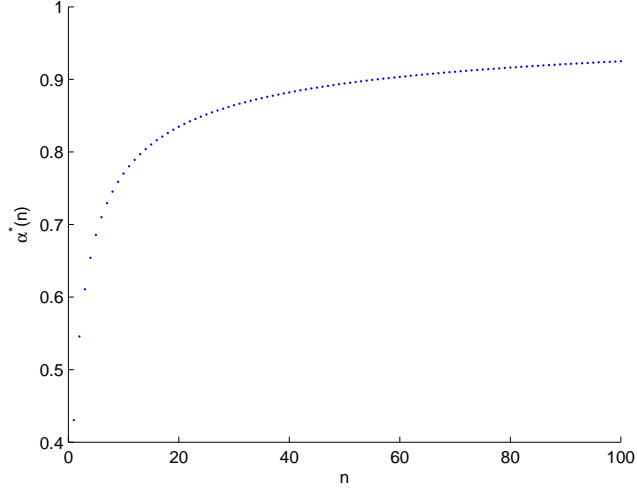}}
\caption{\small The critical values of $\alpha^{*}$ as a function of $n$}
\label{fig:comp}
\end{figure}

\subsection{Confidence Interval for the Normal Mean}
The standard Student confidence interval of level $1-\alpha$ for $\mu$ is
\begin{equation}
\label{eq:888}
\left(\overline{X}_{n}-t_{n-1;\, \alpha/2}\frac{S_{n}}{\sqrt{n}}\,,\,\,\, \overline{X}_{n}+t_{n-1;\, \alpha/2}\frac{S_{n}}{\sqrt{n}}\right),
\end{equation}
where $\displaystyle t_{d;\, \alpha}$ is the quantile of order $1-\alpha$  of the
Student distribution with $d$ degrees of freedom. The average length $L_{n}$ of \eqref{eq:888} is easily calculated,
\begin{equation}
L_n=2\sqrt{2}\sigma\,\,\, t_{n-1;\, \alpha/2}E_n,\,\,\, \text{where}\,\,\,E_n=\frac{\Gamma(n/2)}{\Gamma((n-1)/2)\sqrt{n(n-1)}}.
\end{equation}
The quantile $\displaystyle t_{d;\, \alpha}$ decreases monotonically in $d$ for any $\alpha<1/2$. This known fact (see, e.g.,  \cite{Ghosh1973})
follows from Lemma \ref{lem:lemma1} with $c_2=\infty$ due to the following properties of the probability density function $\displaystyle f_{d}(x)$
of the Student distribution with $d$ degree of freedom:
\begin{align*}
&f_{d}(x)<f_{d+1}(x),\,\,\,0<x<x_0\\
&f_{d}(x)>f_{d+1}(x),\,\,\,x>x_0,
\end{align*}
for some  $x_0>0$.
\\
To prove that $E_{n}>E_{n+1}$ take the left inequality from Lemma \ref{lem:2}. One has
\begin{align}
\label{eq:L}
&E_n=\frac{1}{\sqrt{n(n-1)}}\frac{\Gamma(n/2)}{\Gamma((n-1)/2)}=\frac{1}{\sqrt{n(n-1)}}\frac{\Gamma((n-2)/2+1)}{\Gamma((n-2)/2+1/2)}
>\frac{1}{\sqrt{n(n-1)}}\sqrt{\frac{n-2}{2}+\frac{1}{4}}.
\end{align}
The right inequality from Lemma \ref{lem:2} implies
\begin{align}
\label{eq:U}
&E_{n+1}=\frac{1}{\sqrt{n(n+1)}}\frac{\Gamma((n+1)/2)}{\Gamma(n/2)}=
\frac{1}{\sqrt{n(n+1)}}\frac{\Gamma((n-1)/2+1)}{\Gamma((n-1)/2+1/2)}<
\frac{1}{\sqrt{n(n+1)}}\sqrt{\frac{n-1}{2}+\frac{1}{2}}.
\end{align}
Now comparing the right hand sides of \eqref{eq:L} and \eqref{eq:U} results in $E_{n}>E_{n+1}$ for $n>3$.
For $n=2,3$, the inequalities $E_2>E_3>E_4$ follow from the explicitly calculated values of $E_2, E_3$, and $E_4$.
\subsection{Miscellaneous Results}
Here we present two examples of families with one-dimensional parameter and univariate sufficient statistics whose distributions
in samples of size $n$ and $n+1$ belong to the same type. In the first example monotonicity of the length of standard confidence interval
follows from Lemma \ref{lem:lemma1}, while in the second it is proved by simple direct calculations.\\
{\it Example $1$}. Let $\left(X_1,\ldots,X_n\right)$ be a sample from Pareto distribution  with probability density function
$$\displaystyle f\left(x; \theta\right)=\frac{\theta-1}{x^{\theta}},\,\,\,x\geq1$$
with $\displaystyle \theta>1$ as a parameter. The sufficient statistic for $\displaystyle \theta$ is $\displaystyle S_{n}=\sum_{i=1}^{n}\log(X_i)$.
The pivot $\displaystyle (\theta-1)S_n$ has a gamma distribution $\displaystyle Gamma\left(n, 1\right)$. The standard confidence interval of level $1-\alpha$ for $\theta$ is
\begin{align*}
&\left(1+\frac{\gamma_{n;\,\alpha/{2}}}{S_n},\,\,\,1+\frac{\gamma_{n;\,1-\alpha/{2}}}{S_n}\right)
\end{align*}
and its average length is
\begin{align*}
&L_{n}=\left(\gamma_{n;\,1-\alpha/{2}}-\gamma_{n;\,\alpha/{2}}\right)E\left(\frac{1}{S_{n}}\right)
=(\theta-1)\left(\frac{\gamma_{n;\,1-\alpha/{2}}-\gamma_{n;\,\alpha/{2}}}{n}\right)\frac{n}{n-1}.
\end{align*}
Due to \eqref{eq:777},
$\displaystyle \frac{\gamma_{n+1;\,1-\alpha/{2}}}{n+1}-\frac{\gamma_{n+1;\,\alpha/{2}}}{n+1}< \frac{\gamma_{n;\,1-\alpha/{2}}}{n}-\frac{\gamma_{n;\,\alpha/{2}}}{n}$.
Furthermore,  $\displaystyle \frac{n+1}{n}<\frac{n}{n-1}$
so that $\displaystyle L_{n+1}<L_{n}$. \hfill $\square$
\\
\\
{\it Example $2$}. Let $\left(X_1,\ldots,X_n\right)$ be a sample from a uniform distribution $\displaystyle U(0, \theta)$ on $(0, \theta)$
with $\displaystyle \theta>0$ as a parameter. The sufficient statistic for $\theta$ is $\displaystyle M_{n}=\max\left(X_1,\ldots,X_n\right)$
and the standard confidence interval of level $1-\alpha$ for $\theta$ is $\displaystyle \left(M_{n}, M_{n}/\alpha^{1/n}\right)$.
The average length $\displaystyle L_{n}$ is
\begin{align*}
&L_{n}=\frac{n}{n+1}\theta \left(\frac{1}{\alpha^{1/n}}-1\right)
\end{align*}
and simple calculations show that $\displaystyle L_{n+1}<L_{n}$ for any $n\geq1$ and $\alpha<1$.
\section{An Open Problem}
Let $\left(X_1,X_2,\ldots,X_n\right)$ be a sample from a population with a distribution $\displaystyle F\left(x; \theta\right)$ given by
$$\displaystyle dF\left(x; \theta\right)=e^{\theta x-\psi(\theta)}dF(x),\,\,\,\theta \in \Theta.$$
In other words, $\displaystyle F\left(x; \theta\right)$ belongs to a natural exponential family (NEF) with  generator $\displaystyle F$.
The sum $\displaystyle T_{n}=\sum_{i=1}^{n}X_{i}$ is a complete sufficient statistic for $\displaystyle \theta$.
Let $\displaystyle \delta_{n;\, \alpha}\left(\theta\right)$ be the quantile of order $\displaystyle \alpha$ of the distribution of
$\displaystyle T_{n}$. Since the latter has the monotone likelihood ratio property,  $\displaystyle \delta_{n;\, \alpha}\left(\theta\right)$
is monotone in $\displaystyle \theta$.

The random variable
\begin{align*}
&h\left(T_{n}; \theta\right)=
\left\{
\begin{array}{ccc}
   1, &  & \delta_{n;\, \alpha/2}\left(\theta\right)<T_{n}<\delta_{n;\, 1-\alpha/2}\left(\theta\right) \\
   0, &  &\text{otherwise}
 \end{array}
 \right.
\end{align*}
is a pivot leading to a confidence interval of level $1-\alpha$ for $\displaystyle \theta$,
\begin{equation}
\label{eq:open}
\left(\delta^{-1}_{n;\, 1-\alpha/2}(T_{n}),\,\,\,\ \delta^{-1}_{n;\, \alpha/2}(T_{n})\right).
\end{equation}
One expects that the mean length of \eqref{eq:open} decreases monotonically in $n$. To the best of our knowledge,
this is proved only for a few special $F$. A general result would be of a methodological interest, at the very least.
\\
\\
{\bf Acknowledgment}\\
The authors would like to thank the referee for careful reading the manuscript and very helpful suggestions.

\end{document}